\renewcommand{\theta}{\vartheta}
\renewcommand{\phi}{\varphi}
\renewcommand{\rho}{\varrho}
\newtheorem{Def}{Definition}[section]
\newenvironment{definition}{\begin{Def} \rm}{\end{Def}}
\newtheorem{lemma}[Def]{Lemma}
\newtheorem{theorem}[Def]{Theorem}
\newtheorem{remark}[Def]{Remark}
\newcommand{\Naturals}{{\mathbb N}}
\newcommand{\Reals}{{\mathbb R}}
\newcommand{\tnorm}{\odot}
\newcommand{\tconorm}{\oplus}
\newcommand{\Ltnorm}{\mathbin{\odot_\text{\L}}}
\newcommand{\Ltconorm}{\mathbin{\oplus_\text{\L}}}
\newcommand{\true}{\top}
\newcommand{\false}{\bot}
\DeclareMathOperator{\stneg}{\sim}
\newcommand{\impl}{\rightarrow}
\newcommand{\implc}[1]{\stackrel{#1}{\rightarrow}}
\newcommand{\entails}{\models}
\newcommand{\Q}{$\mathsf{Q}$}
\newcommand{\LGI}{$\mathsf{LGI}$}
\newcommand{\LGIM}{$\mathsf{LGIM}$}
\newcommand{\provesLGI}{\vdash_{\mathsf{LGI}}}
\newcommand{\provesLGIM}{\vdash_{\mathsf{LGIM}}}
\newcommand{\Add}[1]{\hspace{0.5em}\text{\rm \footnotesize #1}}
\newcommand{\Rule}[1]{\text{\rm\footnotesize (#1)} \hspace{1em}}
\newcommand{\average}[1]{\varnothing(#1)}
\newcommand{\averagealone}{\varnothing}
\newcommand{\LPihalf}{\L$\Pi\frac 1 2$}
\begin{document}

%\begin{frontmatter}

\title{Reasoning with graded information: \\
the case of diagnostic rating scales in healthcare
\thanks{Preprint of an article published by Elsevier in the {\sl Fuzzy Sets and Systems} {\bf 298} (2016), 207-221. It is available online at: {\tt
https://www.sciencedirect.com/science/article/pii/ S0165011415005370}.}}

%\author[label1]{Thomas Vetterlein}
%\address[label1]{Johannes Kepler University, Linz, Austria}
%\ead{thomas.vetterlein@jku.at}

%\author[label2]{Anna Zamansky}
%\address[label2]{University of Haifa, Israel}
%\ead{annazam@is.haifa.ac.il}

\author{Thomas Vetterlein}

\affil{Johannes Kepler University, Linz, Austria; \\
{\tt thomas.vetterlein@jku.at}}

\author{Anna Zamansky}

\affil{University of Haifa, Israel; \\
{\tt annazam@is.haifa.ac.il}}

\maketitle

%%%%%%%%%%%%%%%%%%%%%%%%%%%%%%%%%%%%%%%%%%%%%%%%%%%%%%%%%%%%%%%%%%%%%%%%%

\begin{abstract}

In medicine one frequently deals with vague information. As a tool for reasoning in this area, fuzzy logic suggests itself. In this paper we explore the applicability of the basic ideas of fuzzy set theory in the context of medical assessment questionnaires, which are commonly used, for instance, to support the diagnosis of psychological disorders.

The items of a questionnaire are answered in a graded form; patients are asked to choose an element on a linear scale. The derived diagnostic hypotheses are graded as well. This leads to the question whether there is a logical formalism that is suitable to capture the score calculation of medical assessment questionnaires and thereby provides a mathematical justification of the way in which the calculation is typically done.

We elaborate two alternative approaches to this problem. First, we follow the lines of mathematical fuzzy logic. For the proposed logic, which can deal with the formation of mean values, we present a Hilbert-style deduction system. In addition, we consider a variant of the prototype approach to vagueness. In this case we are led to a framework for which to obtain a logical calculus turns out to be difficult, yet our gain is a model that is conceptually comparably well-justifiable.

{\it Keywords:} Reasoning under vagueness, healthcare applications, assessment questionnaires, approximate reasoning, fuzzy logic.

\end{abstract}

%%%%%%%%%%%%%%%%%%%%%%%%%%%%%%%%%%%%%%%%%%%%%%%%%%%%%%%%%%%%%%%%%%%%%%%%%%%%%

\section{Introduction}
\label{sec:introduction}

Fuzzy set theory and fuzzy logic have originally been developed with the intention to overcome the particular difficulties that arise when an application requires the evaluation of \emph{vague} information. Here, we call a property vague if it cannot be sharply delimited from its opposite. By default, a property that is applicable to a certain class of objects or processes is thought of as being dichotomous; it serves us to distinguish those cases in which it holds from those in which it does not hold. However, considering a situation in more detail, we may notice that it might not under all circumstances be possible to decide if a property applies or not. In fact, a characteristic feature of vague properties is the presence of borderline cases.

When wondering about examples of vagueness, there is one field that offers an apparently unlimited amount of notions that belong to this category. In fact, in medicine, it might, conversely, be found difficult to find descriptions of a different kind. A property describing the state of some patient is usually vague; we cannot use it positively or negatively under all possible circumstances. More specifically, we think about signs and symptoms of human diseases. Consider, e.g., the property of ``having high fever''. Below a temperature of, e.g., $38.5^\circ$, nobody would speak about high fever; above, e.g., $39.5^\circ$ it is not questionable to speak about high fever; the remaining cases are borderline.

A common approach to deal with borderline cases is to extend the two-element set of truth values to a continuous-valued one; the two values ``false'' and ``true'' are replaced by the real unit interval. The property of having high fever, e.g., may be conveniently described by a fuzzy set mapping each possible temperature to such a generalised truth value. This is the very idea of fuzzy set theory \cite{zadeh1965fuzzy}, which, as far as the pure use of grades is concerned, is intuitively quite convincing.

We may ask to which extent the idea has been established as a tool for reasoning in medicine. The situation is somewhat ambiguous. Fuzzy logic has been applied, under this name, in the framework of several projects concerned with automated reasoning about medical information. In fact, in this context, its use seems to be clearly implied, given the fact that most of the processed information is vague. Already the system that is often called the ``grandfather'' of medical expert systems, MYCIN, was based on a continuous set of degrees \cite{shortliffe1975MYCIN}. System of this or a similar kind, however, are not often found in routine use in healthcare. To establish the principles of fuzzy logic in medicine -- principles that are not unquestioned even in the community of fuzzy logicians -- is certainly hard. At least in some cases, the basic ideas do have been welcomed. For instance, the programming language Arden Syntax, designed for a platform-independent representation of medical kn
 owledge, has been extended to simplify the treatment of fuzzy sets and is now called Fuzzy Arden Syntax \cite{vetterlein2010arden}. Furthermore, MONI, a decision support system based on a simple fuzzy logic, is at present in use at the General Hospital of Vienna; its purpose is the automated detection of hospital-acquired infections \cite{debruin2012MONI}.

Given the limited presence of fuzzy logic in medical decision support, the aspect that we address in the present paper provides a remarkable contrast. Grades are not just used within computer-based decision support systems; and clinicians do not necessarily consider grades as somewhat academic. In another context gradedness of information comes into play quite naturally.

The \emph{assessment of symptoms} is an essential aspect of the diagnostic procedure for various disorders, in particular in psychiatry and psychology \cite{uexkull2008theorie}. We recall that, as opposed to signs, which are objective phenomena detected by the clinician, symptoms are subjective experiences reported by the patient, such as a complaint of pain or depressed feeling. Thus, with symptoms, clinicians must rely on the patient's self-report, with no objective tests being available to confirm or rule out them \cite{kessler2000methodological}. This in turn opens up the possibility that patients report their symptoms autonomously. The use of questionnaires has contributed to a reduction of the working load in healthcare \cite{heymann2003diagnosebezogene,tritt2007entwicklung}. 

As a key element that we find in this context, the occurring questions can be considered as vague. In accordance with this observation, the answers are given in the form of grades. Consider, for instance, the ICD-10 Symptom Rating (ISR) questionnaire \cite{tritt2007entwicklung}, which was created for the assessment of several psychological disorders. In order to evaluate the depressive syndrome, an answer to the item ``I feel down and depressed'' is required. To this end, the patient is asked to choose an element on a five-element linear scale, ranging from ``0 - does not apply'' to ``4 - applies extremely''. Apparently, we can understand the question as vague and the patient makes a choice to which \emph{degree}, from his point of view, the indicated statement applies to his actual state.

The question on which the present work is based is now: how are the degrees further processed? Commonly, the answers to all questions are aggregated to a single value; questionnaires that are used in routine healthcare are commonly evaluated by the calculation of \emph{rating scores}. Thus the question is which aggregation method is used for which reason. Ideally, questionnaires are designed and evaluated on the basis of well justified principles. In fact, within medical computer science, the topic has found an increasing interest during recent years. A large volume of works focuses on developing theories for a \emph{statistical} analysis of questionnaire data, such as classical test theory or item response theory; see, e.g., \cite{reckase1997past,mcdonald2000basis,devellis2006classical}.

These approaches might, on the one hand, provide useful insights into an optimal score calculation in questionnaires, considering, e.g., the weight and mutual dependency of items in a proper way. In practice, on the other hand, a procedure seems often to be chosen on pragmatic grounds. Consider, for instance, the development of the afore-mentioned ISR questionnaire. In this case, a panel of experts decided on the basis of their medical expertise. They voted on the method of calculating the total score and chose the mean value \cite{tritt2007entwicklung}. The experts also discussed the possibility of adding weights to the items, but decided against it. Other examples of questionnaires that employ similar aggregation methods include the Hamilton Depression Rating Scale and Zung Self-Rated Depression Scale, the RAND SF-36-item, and many more. In our work, we mainly restrict our attention to the mean value, yet having the intention to propose a framework general enough to take account also for other methods of calculating scores. 

The aim of the proposed framework is to examine the problem of score computation from a logical angle. The aim of questionnaire evaluation is deriving from a set of degrees a new degree -- the total score. Many-valued logics provide mathematical tools for dealing with properties that are endowed with truth degrees. Accordingly, our goal is to propose a ``logic of questionnaires'': given a patient's answers to questionnaire items, the formalism should support deriving a total score, possibly taking into account further implicit assumptions.

We provide in this paper two alternative approaches to this problem. The first approach is perhaps the more naive one. We proceed in a way that has become common in mathematical fuzzy logic \cite{hajek1998MFL}. Accordingly, we process the occurring degrees regardless of their origin and define ways of connecting them just as required by our application. However, ``mean-value operations'' do not belong to those operations that are typically included in a fuzzy logic. An exception is the so-called Compensatory Fuzzy Logic discussed in \cite{bouchet2011arithmeticmean}. Moreover, there are several fuzzy logics that are strong enough to allow the definition of mean values. For instance, B.\ Gerla's Rational \L ukasiewicz Logic corresponds to divisible MV-algebras. For each $n$, the logic has a connective interpreted by the division by $n$ and hence for each $n$ formulas, a formula exists that is assigned the mean value of its constituents \cite{gerla2001rationalLL}. On a similar idea, Kukkurainen's and Turunen's formalism presented in \cite{kukkurainen2002similarityreasoning} is based. A further, particular strong fuzzy logic is \LPihalf\ \cite{esteva2001LP}, a combination of \L ukasiewicz and product logic, enriched by the constant $\frac 1 2$. Any rational value in $[0,1]$ is expressible in it and consequently again, mean values can be formed. Here, we propose a logic that is syntactically as scarce as possible, but rich enough for our intended application. We present a sound and complete corresponding Hilbert-style calculus.

Our approach might possess a considerable potential in regards to its formal development. As regards its interpretation and justification, however, it is subjected to the criticism that we find in the ongoing discussion on the foundations of mathematical fuzzy logic and its so-called design choices; see, e.g., \cite{fermueller2011conversation}. For this reason, we propose in addition an alternative approach, aiming at a more appropriate account for the vagueness in assessment questionnaires. The starting point is the well-known idea of modelling the \emph{prototypes} of a vague property by subsets of a metric space. A formalism based on this approach is, e.g., \cite{lawry2009prototype}. We take, however, the set of \emph{counterexamples} of a vague property into account as well; we accordingly deal with a pair of two subsets in a metric space. Also this idea has been exploited, in particular in Nov\' ak's work on the modelling of linguistic expressions \cite{novak2008trichotomous}. The formalism that we propose here is tailored to the emulation of the mean-value score calculation and unrelated to mathematical fuzzy logic. We specify, however, only a semantic framework; how the reasoning in it can be axiomatised remains an open question.

The rest of the paper is organized as follows. Section \ref{sec:fuzzy-logic-with-mean-value} presents a fuzzy logic dealing with mean values. Section \ref{sec:prototypes-counterexamples} is devoted to an alternative approach formalising vague information based on prototypes and counterexamples. The final Section \ref{sec:conclusion} provides a summary and points out remaining challenges.

\section{A fuzzy logic with a mean-value operation}
\label{sec:fuzzy-logic-with-mean-value}

We shall present in this section a particular fuzzy logic. We note that ``fuzzy logic'' is understood in the mathematical sense, that is, we speak about logics that are based on an extended, linearly ordered set of truth values \cite{hajek1998MFL}. Mean values are not commonly dealt with in fuzzy logic; we propose here a particular adaptation for the application under consideration.

When designing our logic, however, we will not just care about including the possibility of calculating mean values. If we did so, we could, for instance, opt for Rational \L ukasiewicz Logic \cite{gerla2001rationalLL} or \LPihalf{} \cite{esteva2001LP}. We rather choose an approach with the aim of alleviating a general problem of fuzzy logic: the interpretation of the implication connective. Namely, implications will be endowed with explicit grades and these grades will not be subjected to logical connectives.

\subsection{Fuzzy logic and the role of the implication}
\label{subsec:role-of-implication}

The statements of our logic will be intended to refer to medical facts and to explain the state of a patient. Consequently, all properties with which we deal are assumed to be graded. Furthermore, we follow a main principle of mathematical fuzzy logic: the degree of compound expressions is calculated from the degree of the components.

As common in fuzzy logic, the connectives are chosen pragmatically. We include the possibly most significant connectives: the minimum, the maximum, and the standard negation. In addition to the minimum, the ``weak conjunction'', we include a ``strong conjunction'', interpreted by a t-norm.

Furthermore, it will be possible to make comparisons with regard to degrees in the following way. For two formulas $\alpha$ and $\beta$, the expression $\alpha \implc{1} \beta$ is two-valued and has the meaning that the truth degree of $\beta$ is at least as large as the degree of $\alpha$. That is, if $\alpha$ is assigned $s$ and $\beta$ is assigned $t$, we require $s \leq t$. This relationship can be weakened; $\alpha \implc{c} \beta$, where $0 \leq c \leq 1$, says that the inquality holds up to a tolerance value of $1-c$: the truth degree of $\beta$ is at least as large as the degree of $\alpha$ reduced by $1-c$. That is, we require $s - (1-c) \leq t$ in this case.

We note that we do not assign continuous truth degrees to expressions of the form $\alpha \implc{c} \beta$; we rather adopt the approach that the comparison of truth degrees leads to a positive or to a negative result. Consequently, we also do not allow to nest implicative relationships.

There are two reasons that motivate our decision. First, the approach is suitable for our application; we get not less and not (much) more than what we need. Second, we avoid difficulties concerning the interpretation of formal statements. Instead of working with expressions of the form $\alpha \implc{c} \beta$, we could alternatively include truth constants as well as the connective $\impl$ to our language, interpreted by the residuum corresponding to the strong conjunction. However, in this case expressions involving the implication could be nested and statements with a doubtful meaning like $(\alpha \impl \beta) \impl \beta$ would be possible. We are tempted to read ``if $\alpha$ is stronger than $\beta$, then $\beta$ holds'', but apart from the fact that this statement is itself inacceptable, it would not at all reflect the meaning of the formula. We could rather say ``whenever a property $\gamma$ is strong enough such that $\alpha$ and $\gamma$ together are stronger than $\beta$, then $\gamma$ is stronger than $\beta$''. We consider such constructs as not only artificial but in fact unnecessary. We should certainly stress that we refer here to a particular context; there are frameworks in which the residual implication does play a clear role \cite{fermueller2008overview}. But such contexts are much different from ours.

Accordingly, we use a two-level approach. The graded implications are on the inner level and they are crisp: they either hold or do not hold. On the outer level, graded implications will be allowed to be combined by the classical logical connectives.

\subsection{A fuzzy logic based on graded implications}

As our first step, we introduce in this subsection a calculus that does not yet deal with mean values; an extended calculus will be presented subsequently.

Our calculus will be called the {\it Logic for Graded Implications}, or \LGI{} for short. Let us specify the syntax for \LGI. We start with a countable set $\phi_0, \ldots$ of variables and the two constants $\false$, standing for clear falsity, and $\true$, standing for full truth. A {\it basic expression} is built up from variables and constants by means of the binary connectives $\land, \lor, \tnorm$ and the unary connective $\stneg$.

For the set of truth degrees, we make the common choice, using the real unit interval $[0,1]$. By a {\it graded implication} of \LGI{} we mean a triple consisting of two basic expressions $\alpha$ and $\beta$ and a real number $c \in [0,1]$, denoted by
\[ \alpha \implc{c} \beta. \]

Finally, a \emph{formula} is built up from graded implications by means of the binary connectives $\land, \lor$ and the unary connective $\lnot$. We call the latter the {\it outer} connectives; their intended meaning is the classical ``and'', ``or'', and ``not'', respectively. As usual, we write $\Phi \impl \Psi$ for $\lnot\Phi \lor \Psi$.

As common in fuzzy logic, we allow the usage of two different conjunctions, the ``weak'' one $\land$ and the ``strong'' one $\tnorm$. Whereas the former will be interpreted by the minimum, the latter will be interpreted by a continuous t-norm. By a {\it t-norm}, we mean a binary operation on the real unit interval that is associative, commutative, having $1$ as a neutral element, and isotone in each argument; see, e.g., \cite{klement2000tnormbook}. An example is the \L ukasiewicz t-norm:
\[ c \Ltnorm d \;=\; (c+d-1) \vee 0, \quad\text{ $c, d \in [0,1]$;} \]
further examples are the product and the G\" odel t-norm. For what follows, we fix a continuous t-norm $\tnorm$.

Furthermore, we denote the t-conorm associated with $\tnorm$ by $\tconorm$, that is,
\[ c \tconorm d \;=\; 1-((1-c) \tnorm (1-d)), \quad\text{ $c, d \in [0,1]$.} \]
The \L ukasiewicz t-conorm is given by $c \Ltconorm d = (c+d) \wedge 1$.

To increase readability, we will use for the interpretation of the connectives the same symbol as for the connectives themselves. We write, for instance, $\tnorm$ in both cases; furthermore, we denote the minimum and maximum of numbers $c, d \in [0,1]$ by $c \wedge d$ and $c \vee d$, respectively; and we put $\stneg c = 1 - c$.

An {\it evaluation} is a mapping $v$ from the set of basic expressions to the real unit interval $[0,1]$ preserving the connectives, $\land, \lor, \tnorm, \stneg$, the latter being interpreted, respectively, by the minimum, maximum, the t-norm $\tnorm$, and the standard negation. An evaluation $v$ is said to {\it satisfy} a graded implication $\alpha \implc{c} \beta$ if
\[ v(\alpha) \leq v(\beta) + \stneg c; \]
and we extend the definition of satisfaction to all formulas by interpreting the outer connectives according to classical propositional logic.

Note that we may equivalently say that an evaluation $v$ satisfies a graded implication $\alpha \implc{c} \beta$ if $v(\alpha) \Ltnorm c \leq v(\beta)$. From this point of view, the \L ukasiewicz t-norm is assigned a special role. In fact, it is exactly this t-norm that allows the interpretation of $c$ as a tolerance value, in the sense explained above (see Subsection \ref{subsec:role-of-implication}). We will not discuss here the question whether we could replace $\Ltnorm$ by a different t-norm.

A {\it theory} $\mathcal T$ is a set of formulas. We say that $\mathcal T$ {\it semantically entails} a formula $\Phi$ if, whenever an evaluation $v$ satisfies all elements of $\mathcal T$, $v$ also satisfies $\Phi$. We write ${\mathcal T} \entails \Phi$ in this case.

We now by proceed defining a proof system.

\begin{definition}
The calculus \LGI{} consists of the following axioms and rules:

\vspace{1.9ex}

\noindent any formula arising from a tautology of classical propositional logic by a uniform replacement of the variables by graded implications;

\vspace{1.9ex}

\noindent for any formulas $\Phi$ and $\Psi$ the rule
\[ \Rule{MP} \frac{\Phi \quad \Phi \impl \Psi}{\Psi}; \]
for any basic expressions $\alpha$, $\beta$, $\gamma$ and $c, d \in [0,1]$ the axioms

\vspace{1.9ex}

\begin{minipage}{0.48\textwidth}
$\Rule{$\land_1$} (\alpha \implc{d} \beta) \land (\alpha \implc{d} \gamma)
\impl (\alpha \implc{d} \beta \land \gamma)$

$\Rule{$\land_2$} \alpha \land \beta \implc{1} \alpha$

$\Rule{$\land_3$} \alpha \land \beta \implc{1} \beta$
\end{minipage}
\begin{minipage}{0.48\textwidth}
$\Rule{$\lor_1$} (\alpha \implc{d} \gamma) \land (\beta \implc{d} \gamma)
\impl (\alpha \lor \beta \implc{d} \gamma)$

$\Rule{$\lor_2$} \alpha \implc{1} \alpha \lor \beta$

$\Rule{$\lor_3$} \beta \implc{1} \alpha \lor \beta$
\end{minipage}

\vspace{1.9ex}

\begin{minipage}{0.5\textwidth}
$\Rule{$\tnorm_1$} (\true \implc{c} \alpha) \land (\true \implc{d} \beta)
\impl (\true \implc{c \tnorm d} \alpha \tnorm \beta)$

$\Rule{$\tnorm_2$} (\alpha \implc{c} \false) \land (\beta \implc{d} \false)
\impl (\alpha \tnorm \beta \implc{c \tconorm d} \false)$

$\Rule{$\tnorm_3$} \true \implc{1} \true \tnorm \true$
\end{minipage}
\begin{minipage}{0.46\textwidth}
$\Rule{$\stneg_1$} (\alpha \implc{d} \beta)
\impl (\stneg \beta \implc{d} \stneg\alpha)$

$\Rule{$\stneg_2$} \stneg\stneg\alpha \implc{1} \alpha$

$\Rule{$\stneg_3$} \alpha \implc{1} \stneg\stneg\alpha$
\end{minipage}

\vspace{1.9ex}

\begin{minipage}{0.48\textwidth}
$\Rule{$\true$} \alpha \implc{1} \true$

$\Rule{$\false$} \false \implc{1} \alpha$

$\Rule{0} \alpha \implc{0} \beta$

$\Rule{$c$} \alpha \implc{c} \alpha$

$\Rule{inkons} \neg(\true \implc{c} \false),
\Add{where $c > 0$}$
\end{minipage}
\begin{minipage}{0.48\textwidth}
$\Rule{trans$_1$} (\alpha \implc{c} \beta) \land (\beta \implc{d} \gamma) \impl (\alpha \implc{c \Ltnorm d} \gamma)$

$\Rule{trans$_2$} (\alpha \implc{c} \false) \land (\true \implc{d} \beta) \impl
(\alpha \implc{c \Ltconorm d} \beta);$

$\Rule{lin$_1$} (\alpha \implc{1} \beta) \lor (\beta \implc{1} \alpha)$

$\Rule{lin$_2$} (\true \implc{d} \alpha) \lor (\alpha \implc{\stneg d} \false)$.

\end{minipage}

\vspace{1.9ex}

We define the notion of a {\it proof} in \LGI{} of a formula $\Phi$ from a theory $\mathcal T$ as usual; we write ${\mathcal T} \provesLGI \Phi$ if there is one.
\end{definition}

Let us have an informal look at the rules. The rules $(\land_1)$--$(\land_3)$ characterise the weak conjunction $\land$ in the usual way. Similar rules hold for the strong conjunction $\tnorm$ only in particular cases, see $(\tnorm_1)$--$(\tnorm_3)$. Moreover, the rules $(\lor_1)$--$(\lor_3)$ characterise the disjunction,  $(\stneg_1)$--$(\stneg_3)$ characterise the involutive negation.

$(\true)$ and $(\false)$ define the truth constants as the bottom and the top element, respectively. As regards the rule $(0)$, recall that the value $c$ attached to a graded implication is such that $1-c$ is the allowed tolerance. If $c = 0$, the tolerance is $1$ and the implication holds independently of the involved truth degrees. The rules $($c$)$ and (trans$_1$) express the reflexivity and transitivy of the implication relation. The rule (trans$_2$) refers to transitivity as well; it implies that if $\alpha$ is assigned $r$, $\beta$ is assigned $s$, and $d \in [0,1]$ is such that $r \leq d$ and $d \leq s$, we can conclude $r \leq s$.

Note next that $\true \implc{c} \false$ cannot hold unless $c = 0$, as established by (inkons). Finally, there are two rules dealing with the linearity of the truth degrees. By (lin$_1$), given the truth degrees $r$ and $s$ of two formulas, either $r \leq s$ or $s \leq r$. (lin$_2$) expresses the fact, given the truth degree $r$ of any formula $\alpha$ and any $d \in [0,1]$, either $d \leq r$ or $r \leq d$.

\begin{theorem} \label{thm:soundness-LGI}
Let $\mathcal T$ be a theory and $\Phi$ a formula of \LGI. If ${\mathcal T} \provesLGI \Phi$, then ${\mathcal T} \entails \Phi$.
\end{theorem}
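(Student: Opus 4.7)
The plan is a routine soundness argument by induction on the length of an \LGI-proof of $\Phi$ from $\mathcal T$. In the inductive step I would handle modus ponens in the standard way: if an evaluation $v$ satisfies $\mathcal T$ and, by induction hypothesis, satisfies both $\Phi$ and $\Phi \impl \Psi = \lnot\Phi \lor \Psi$, then since satisfaction of outer formulas is classical and $v(\Phi) = 1$ in the two-valued sense, $v$ must satisfy $\Psi$. The bulk of the work is in the base case, that is, showing that every axiom schema is satisfied by every evaluation.

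For axioms of the first schema (classical tautologies with graded implications substituted for variables), I would observe that under any evaluation $v$ each graded implication takes a crisp truth value, so such a formula reduces to an instance of a classical propositional tautology and is therefore satisfied. I would dispatch the schemata $(\land_1)$--$(\land_3)$, $(\lor_1)$--$(\lor_3)$, $(\stneg_1)$--$(\stneg_3)$, $(\true)$, $(\false)$, $(0)$, $(c)$, and $(\text{inkons})$ by reading the definition: $\alpha \implc{c} \beta$ is satisfied iff $v(\alpha) \leq v(\beta) + (1-c)$, and then the defining properties of $\wedge$, $\vee$, $\stneg c = 1-c$, and the constants $0,1$ give each assertion directly. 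For $(\tnorm_1)$ and $(\tnorm_2)$ I would rewrite the satisfaction condition in the equivalent form $v(\alpha) \Ltnorm c \leq v(\beta)$, respectively dualise via $\stneg$, and then use the monotonicity of $\tnorm$ together with its relation to $\tconorm$ via De Morgan.

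The most delicate cases are the two transitivity axioms. For $(\text{trans}_1)$, assuming $v(\alpha) \leq v(\beta) + (1-c)$ and $v(\beta) \leq v(\gamma) + (1-d)$, I would add the inequalities to obtain $v(\alpha) \leq v(\gamma) + (2-c-d)$, and then observe that since $v(\alpha), v(\gamma) \in [0,1]$ we also have $v(\alpha) - v(\gamma) \leq 1$; hence $v(\alpha) \leq v(\gamma) + \bigl((2-c-d) \wedge 1\bigr) = v(\gamma) + (1-(c\Ltnorm d))$. For $(\text{trans}_2)$, from $v(\alpha) \leq 1-c$ and $v(\beta) \geq d$, I would split on whether $c+d \leq 1$ (in which case $1-(c\Ltconorm d) = 1-c-d$ and the inequality $v(\alpha) \leq v(\beta) + (1-c-d)$ follows by adding the hypotheses) or $c+d \geq 1$ (in which case $1-(c\Ltconorm d)=0$, and $v(\beta) \geq d \geq 1-c \geq v(\alpha)$ gives what is needed).

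Finally, the two linearity axioms reduce to the linear order of $[0,1]$: for $(\text{lin}_1)$ the two disjuncts $v(\alpha) \leq v(\beta)$ and $v(\beta) \leq v(\alpha)$ exhaust all cases, and for $(\text{lin}_2)$ the alternatives $v(\alpha) \geq d$ and $v(\alpha) \leq d$ translate exactly into the satisfaction of $\true \implc{d} \alpha$ and of $\alpha \implc{\stneg d} \false$, respectively. I expect the arithmetic around $(\text{trans}_1)$ and $(\text{trans}_2)$, where the interplay of the tolerance parameter with the \L ukasiewicz operations has to be carefully cased, to be the only spot requiring more than a one-line verification; everything else should be a direct unpacking of the satisfaction clause.
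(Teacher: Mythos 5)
Your proof is correct and follows the only natural route: induction on proof length, with a case-by-case verification that each axiom schema is valid and that modus ponens preserves satisfaction. The paper itself merely states that ``it is not difficult to check that all rules are sound,'' so your write-up simply supplies the details (including the correct casing for $(\text{trans}_1)$ and $(\text{trans}_2)$) that the authors leave to the reader.
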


\begin{proof}
It is not difficult to check that all rules are sound.
\end{proof}

We next note that lowering the degree of a graded implication leads, as intended, to a weaker statement.

\begin{lemma} \label{lem:making-weight-smaller}
For any basic expressions $\alpha$ and $\beta$, we can prove in \LGI{} $(\alpha \implc{d} \beta) \impl (\alpha \implc{c} \beta)$ if $c \leq d$.
\end{lemma}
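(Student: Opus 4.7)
The plan is to derive the weakening from the transitivity axiom (trans$_1$) together with the reflexivity schema $(c)$. Semantically the idea is: chaining $\alpha \implc{d} \beta$ with an ``identity'' graded implication $\beta \implc{b} \beta$ at a suitably chosen tolerance $1-b$ should yield a composite graded implication $\alpha \implc{d \Ltnorm b} \beta$, and we just need to pick $b$ so that $d \Ltnorm b = c$.

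The concrete choice is $b = 1 - (d-c)$. Since $0 \leq c \leq d \leq 1$, this value lies in $[0,1]$, and a direct computation gives
\[ d \Ltnorm b \;=\; \max(d + b - 1, 0) \;=\; \max(c, 0) \;=\; c. \]
By axiom $(c)$ we have $\beta \implc{b} \beta$ as a theorem of \LGI, and by axiom (trans$_1$) instantiated with $\gamma := \beta$ and with the chosen $b$ we obtain
\[ (\alpha \implc{d} \beta) \land (\beta \implc{b} \beta) \;\impl\; (\alpha \implc{c} \beta). \]

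To conclude, I would use the classical propositional tautology schema (which, by the axioms of \LGI, applies after uniform substitution of graded implications for propositional variables) together with (MP) to detach the conjunct $\beta \implc{b} \beta$. More explicitly, from a classical tautology of the form $(P \land Q) \impl R$ together with $Q$ one derives $P \impl R$; instantiating with $P := (\alpha \implc{d} \beta)$, $Q := (\beta \implc{b} \beta)$, and $R := (\alpha \implc{c} \beta)$ and applying (MP) twice yields the desired formula $(\alpha \implc{d} \beta) \impl (\alpha \implc{c} \beta)$.

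The only point requiring any thought is identifying the correct tolerance $b$ so that the \L ukasiewicz composition $d \Ltnorm b$ matches $c$ exactly; once this is spotted the proof is a mechanical combination of (trans$_1$), $(c)$, and classical propositional reasoning, so there is no serious obstacle.
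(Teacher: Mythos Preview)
Your argument is correct and coincides with the paper's own proof: the paper also invokes axiom $(c)$ to obtain $\beta \implc{1+c-d} \beta$ (your $b$) and then applies (trans$_1$). The only difference is cosmetic---you spell out the classical-propositional detachment step that the paper leaves implicit.
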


\begin{proof}
By rule ($c$), we have $\beta \implc{1+c-d} \beta$; so the assertion follows from (trans$_1$).
\end{proof}

Note that we can assign in our logic truth degrees explicitly. In fact, $\alpha \implc{\stneg c} \false$ is satisfied by an evaluation $v$ if and only if $v(\alpha) \leq c$; similarly, $\true \implc{c} \alpha$ is satisfied if and only if $c \leq v(\alpha)$.

For a basic expression $\alpha$ and $c \in [0,1]$, we shall denote by $\tau(\alpha,c)$ the following set of formulas: 
$$\tau(\alpha,c)=\{\true \implc{t} \alpha\ | where\ t \in [0,1]\ s.t.\  t < c\}\cup\{\alpha \implc{\stneg t} \false\ |\ where\ t \in [0,1] \ s.t.\  t > c\}$$

%consisting of the formulas $\true \implc{t} \alpha$ for any $t \in [0,1]$ such that $t < c$, as well as $\alpha \implc{\stneg t} \false$ for any $t \in [0,1]$ such that $t > c$. 
We say that $\tau(\alpha,c)$ is provable in \LGI{} from a theory $\mathcal T$ if this is the case for any element of $\tau(\alpha,c)$.

\begin{lemma} \label{lem:explicit-values-preserved-under-connectives}
Let $\mathcal T$ be a theory, let $\alpha, \beta$ be basic expressions, and let $c, d \in [0,1]$. If $\tau(\alpha,c)$ and $\tau(\beta,d)$ are provable from $\mathcal T$ in \LGI{}, then so are $\tau(\alpha \land \beta, c \wedge d)$, $\tau(\alpha \lor \beta, c \vee d)$, $\tau(\alpha \tnorm \beta, c \tnorm d)$, and $\tau(\stneg\alpha, \stneg c)$.
\end{lemma}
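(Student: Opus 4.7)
My plan is to handle each of the four connective cases separately. In each case, I extract the necessary graded implications from the hypotheses $\tau(\alpha,c)$ and $\tau(\beta,d)$, apply the axiom of \LGI{} governing the connective in question, and if needed weaken the result via Lemma \ref{lem:making-weight-smaller} and compose with the transitivity rule $(\text{trans}_1)$. Before entering the cases, I would establish the two auxiliary provabilities $\provesLGI \true \implc{1} \stneg \false$ and $\provesLGI \stneg \true \implc{1} \false$; both follow by short chains using $(\true)$, $(\false)$, $(\stneg_1)$, $(\stneg_2)$, $(\stneg_3)$, and $(\text{trans}_1)$, and they will be needed in the negation case.

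For $\tau(\alpha \land \beta, c \wedge d)$: if $t < c \wedge d$, both $\true \implc{t} \alpha$ and $\true \implc{t} \beta$ are provable and axiom $(\land_1)$ combines them into $\true \implc{t} \alpha \land \beta$. If $t > c \wedge d$, then $t > c$ or $t > d$; in the first subcase $\alpha \implc{\stneg t} \false$ is provable and $(\text{trans}_1)$ applied together with $(\land_2)$ yields $\alpha \land \beta \implc{\stneg t} \false$, the other subcase being symmetric via $(\land_3)$. The disjunction case is fully dual, using $(\lor_1)$--$(\lor_3)$.

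The strong conjunction is the most substantive case and relies on the continuity of $\tnorm$. For $t < c \tnorm d$ (which forces $c, d > 0$), continuity yields $t_1 < c$ and $t_2 < d$ with $t \leq t_1 \tnorm t_2$; axiom $(\tnorm_1)$ delivers $\true \implc{t_1 \tnorm t_2} \alpha \tnorm \beta$, which Lemma \ref{lem:making-weight-smaller} weakens to $\true \implc{t} \alpha \tnorm \beta$. For $t > c \tnorm d$, continuity (supplemented by rule $(0)$ in the boundary cases $c = 1$ or $d = 1$, where no $t_i > 1$ is available in $[0,1]$ and $\alpha \implc{0} \false$ takes the role of the missing hypothesis) yields $t_1, t_2 \in [0,1]$ such that $\alpha \implc{\stneg t_1} \false$ and $\beta \implc{\stneg t_2} \false$ are provable and $t_1 \tnorm t_2 \leq t$, equivalently $\stneg t_1 \tconorm \stneg t_2 \geq \stneg t$; axiom $(\tnorm_2)$ and weakening then give $\alpha \tnorm \beta \implc{\stneg t} \false$.

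For the negation: given $t < \stneg c$ one has $\stneg t > c$, so $\alpha \implc{t} \false$ is in the hypothesis (unpacking $\tau(\alpha,c)$ with $t' = \stneg t$ and noting $\stneg\stneg t = t$); $(\stneg_1)$ then yields $\stneg \false \implc{t} \stneg \alpha$, and prepending the auxiliary $\true \implc{1} \stneg \false$ via $(\text{trans}_1)$ produces $\true \implc{t} \stneg \alpha$. Given $t > \stneg c$ one has $\stneg t < c$, so $\true \implc{\stneg t} \alpha$ is provable; $(\stneg_1)$ gives $\stneg \alpha \implc{\stneg t} \stneg \true$, and appending the auxiliary $\stneg \true \implc{1} \false$ yields $\stneg \alpha \implc{\stneg t} \false$. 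I expect the hard part to be the continuity argument in the $\tnorm$ case together with its boundary bookkeeping; the other three connectives reduce almost mechanically to the relevant axiom plus a single transitivity step.
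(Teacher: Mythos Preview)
Your proposal is correct and follows essentially the same case-by-case approach as the paper's proof, invoking the same axioms $(\land_i)$, $(\lor_i)$, $(\tnorm_i)$, $(\stneg_i)$ together with continuity of $\tnorm$ and transitivity. In fact you are slightly more careful than the paper: the paper applies $(\stneg_1)$ to $\alpha \implc{t} \false$ and immediately writes $\true \implc{t} \stneg\alpha$ without passing through $\stneg\false$, and it dismisses the boundary cases $c,d \in \{0,1\}$ with a remark, whereas you spell out the auxiliary implications and the role of rule $(0)$ explicitly.
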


\begin{proof}
Assume that $\tau(\alpha,c)$ and $\tau(\beta,d)$ are provable from $\mathcal T$ in \LGI{}. We presume in the sequel that $c$ and $d$ are distinct from $0$ and $1$; if $c$ or $d$ is $0$ or $1$, some of the arguments are to be omitted.

To see that we can derive $\tau(\alpha \land \beta, c \wedge d)$ from $\mathcal T$ in \LGI{}, let $t < c \wedge d$. Then $t < c$ and $t < d$ and hence $\true \implc{t} \alpha$ and $\true \implc{t} \beta$ and by ($\wedge_1$) $\true \implc{t} \alpha \land \beta$. Let now $t > c \wedge d$. If then $t > c$, we conclude from $\alpha \implc{\stneg t} \false$ by ($\wedge_2$) that $\alpha \land \beta \implc{\stneg t} \false$. If $t > d$, we draw the same conclusion from $\beta \implc{\stneg t} \false$ by ($\wedge_3$).

Similarly, we derive $\tau(\alpha \lor \beta, c \vee d)$.

To see that $\tau(\alpha \tnorm \beta, c \tnorm d)$ is derivable as well, let $t < c \tnorm d$. By the continuity of $\tnorm$, there an $r < c$ and $s < d$ such that $t = r \tnorm s$. From $\true \implc{r} \alpha$ and $\true \implc{s} \beta$, we then conclude $\true \implc{t} \alpha \tnorm \beta$ by ($\tnorm_1$). Similarly, we derive $\alpha \tnorm \beta \implc{\stneg t} \false$ for $t > c \tnorm d$.

Finally, to see that $\tau(\stneg\alpha, \stneg c)$ is derivable, let $t < \stneg c$. Then $\stneg t > c$, hence $\alpha \implc{t} \false$ and, by ($\stneg_1$), $\true \implc{t} \stneg\alpha$. Similarly, for $t < \stneg c$, we derive $\stneg\alpha \implc{\stneg t} \false$.
\end{proof}

\begin{lemma} \label{lem:explicit-values-fulfil-graded-implication}
Let $\mathcal T$ be a theory, let $\alpha, \beta$ be basic expressions, and let $c, d \in [0,1]$. Assume that $\tau(\alpha,c)$ and $\tau(\beta,d)$ are provable from $\mathcal T$ in \LGI{}. Then ${\mathcal T} \provesLGI \alpha \implc{r} \beta$ if $r < 1 - c + d$, and ${\mathcal T} \provesLGI \neg(\alpha \implc{r} \beta)$ if $r > 1 - c + d$.
\end{lemma}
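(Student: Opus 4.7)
The plan is semantically guided: since $\tau(\alpha,c)$ and $\tau(\beta,d)$ are provable, any model satisfying them has $v(\alpha)=c$ and $v(\beta)=d$, and under this constraint the graded implication $\alpha\implc{r}\beta$ is satisfied precisely when $c\le d+(1-r)$, i.e.\ when $r\le 1-c+d$. So the two assertions correspond to the strict $<$ and $>$ sides of this boundary, and the task is to produce syntactic witnesses on each side.

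For $r<1-c+d$ the key tool is rule (trans$_2$), which combines an upper bound on $\alpha$ with a lower bound on $\beta$. I would pick $t\in(c,1]$ and $s\in[0,d)$ so close to $c$ and $d$ that $(1-t)+s\ge r$; this is possible precisely because $1-c+d>r$. From $\tau(\alpha,c)$ the formula $\alpha\implc{\stneg t}\false$ is provable (or use rule (0) in the boundary situation $t=1$), and from $\tau(\beta,d)$ the formula $\true\implc{s}\beta$ is provable (analogously for $s=0$). Rule (trans$_2$) then delivers $\alpha\implc{\stneg t\Ltconorm s}\beta$, and since $\stneg t\Ltconorm s=((1-t)+s)\wedge 1\ge r$, Lemma~\ref{lem:making-weight-smaller} yields $\alpha\implc{r}\beta$.

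For $r>1-c+d$ I would reason by contradiction using the classical deduction theorem, which is available for the outer level because it is axiomatised by the classical tautologies together with (MP). Assuming $\alpha\implc{r}\beta$ in addition to the hypotheses, I would pick $t\in[0,c)$ and $s\in(d,1]$ so close to $c$ and $d$ that $t+r-s-1>0$; this is possible precisely because $r-1+c-d>0$. From $\tau(\alpha,c)$ take $\true\implc{t}\alpha$ and from $\tau(\beta,d)$ take $\beta\implc{\stneg s}\false$. Two applications of (trans$_1$) to the chain $\true\implc{t}\alpha$, $\alpha\implc{r}\beta$, $\beta\implc{\stneg s}\false$ produce $\true\implc{(t\Ltnorm r)\Ltnorm\stneg s}\false$, where a direct \L ukasiewicz computation gives $(t\Ltnorm r)\Ltnorm\stneg s=(t+r-s-1)\vee 0=:r'>0$. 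Rule (inkons) then supplies $\neg(\true\implc{r'}\false)$, producing a classical contradiction; the deduction theorem yields $\mathcal T\provesLGI\neg(\alpha\implc{r}\beta)$.

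The main obstacle I anticipate is not the chaining of implications but the careful bookkeeping of the corner cases $c,d\in\{0,1\}$: some of them become vacuous because $r\in[0,1]$ would force $r$ outside its allowed range, while the remaining ones require the missing extremal graded implication $\alpha\implc{0}\false$ or $\true\implc{0}\beta$ to be supplied by rule (0). A secondary but routine concern is explicitly justifying the use of the classical deduction theorem in the second part, which follows immediately from the presence of the classical tautologies as axioms together with (MP).
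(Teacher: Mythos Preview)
Your proposal is correct and follows essentially the same route as the paper's proof: (trans$_2$) combined with rule (0) for the boundary cases in the first part, and a chain of (trans$_1$) followed by (inkons) in the second part. The only cosmetic differences are that the paper chooses the parameters so that the resulting degree equals $r$ exactly (avoiding the appeal to Lemma~\ref{lem:making-weight-smaller}), and that the paper leaves the outer-level classical reasoning implicit rather than naming the deduction theorem.
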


\begin{proof}
Let $r < 1 - c + d$. Assume first that $c < 1$ and $d > 0$. Let $s > c$ and $t < d$ be such that $r = 1 - s + t$. From $\alpha \implc{\stneg s} \false$ and $\true \implc{t} \beta$, we infer $\alpha \implc{r} \beta$ by (trans$_2$) because $\stneg s \Ltconorm t = r$. Assume second that $c = 1$. Then $d > 0$; we put $s = 1$ and $t = r < d$ and by (0) we may argue as before. Assume third that $d = 0$. Then $c < 1$; we put $s = 1-r > c$ and $t = 0$ and again by (0) we argue as in the first case.

Let $r > 1 - c + d$. Note that then $c > 0$ and $d < 1$. Thus there are $s < c$ and $t > d$ such that $r > 1 - s + t$. From $\true \implc{s} \alpha$, $\,\alpha \implc{r} \beta$, and $\beta \implc{\stneg t} \false$, we infer $\true \implc{s \Ltnorm r \Ltnorm \stneg t} \false$ by (trans$_1$). As $s \Ltnorm r \Ltnorm \stneg t > 0$, the second part follows by (inkons).
\end{proof}

We now turn to the question of completeness of our calculus \LGI{} with regard to its intended semantics. The following statement might be seen as analogous to the type of completeness that was first proposed by Pavelka in the context of fuzzy logic with evaluated syntax \cite{pavelka1979onfuzzylogic}; see also \cite{novak1999principlesfuzzylogic,hajek1998MFL}.

We need one further additional result on \LGI. The fact expressed in the following lemma is well-known but would, by default, require transfinite induction in the present context. We provide a proof in order to demonstrate that we can do without.

\begin{lemma} \label{lem:theorem-extension}
Let $\mathcal T$ be a theory consisting of graded implications, and let $\Phi$ be a formula such that $\Phi$ is not provable from $\mathcal T$. Then there is a complete theory $\bar{\mathcal T}$ such that $\mathcal T \subseteq \bar{\mathcal T}$ and $\Phi$ is not provable from $\bar{\mathcal T}$ either.
\end{lemma}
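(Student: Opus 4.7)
The standard Lindenbaum construction enumerates all formulas and decides them one at a time. Since our graded implications carry a real parameter $c \in [0,1]$, there are continuum many of them, so a direct implementation would require transfinite induction. To stay within a countable induction, the idea is to exploit Lemma \ref{lem:making-weight-smaller}: for any two basic expressions $\alpha, \beta$ the family $\{\alpha \implc{c} \beta : c \in [0,1]\}$ is linearly ordered by logical strength, so all graded implications built from this pair are determined, up to a single marginal choice, by one critical value $c^*_{\alpha,\beta} \in [0,1]$. Since pairs of basic expressions form a countable set, the construction then requires only countably many steps.

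Concretely, I would enumerate the pairs of basic expressions as $(\alpha_0, \beta_0), (\alpha_1, \beta_1), \ldots$ and build an increasing chain $\mathcal{T} = \mathcal{T}_0 \subseteq \mathcal{T}_1 \subseteq \cdots$ while maintaining the invariant that $\mathcal{T}_i$ does not prove $\Phi$ in \LGI{}. At step $i$, consider
\[ C_i \;=\; \{c \in [0,1] : \mathcal{T}_i \cup \{\alpha_i \implc{c} \beta_i\} \text{ does not prove } \Phi\}, \]
which contains $0$ by rule ($0$) and is downward closed by Lemma \ref{lem:making-weight-smaller}. Let $c^*_i = \sup C_i$. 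Depending on whether $c^*_i \in C_i$, I would enlarge $\mathcal{T}_i$ either by $\{\alpha_i \implc{c} \beta_i : c \leq c^*_i\} \cup \{\neg(\alpha_i \implc{c} \beta_i) : c > c^*_i\}$, or else by $\{\alpha_i \implc{c} \beta_i : c < c^*_i\} \cup \{\neg(\alpha_i \implc{c} \beta_i) : c \geq c^*_i\}$, to form $\mathcal{T}_{i+1}$. Finally I would set $\bar{\mathcal{T}} = \bigcup_{i} \mathcal{T}_i$. Completeness is then immediate, because every graded implication appears decided in $\bar{\mathcal{T}}$ at the step handling its underlying pair, and every formula is classically built from graded implications; non-provability of $\Phi$ reflects the finiteness of proofs.

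The crux, and main obstacle, is establishing that the invariant is indeed preserved. If a proof of $\Phi$ from $\mathcal{T}_{i+1}$ existed, it would use only finitely many of the newly added formulas; using Lemma \ref{lem:making-weight-smaller} together with its classical contrapositive, these can be consolidated into at most one positive instance $\alpha_i \implc{c'} \beta_i$, with $c'$ the maximum of the positive values used, and at most one negative instance $\neg(\alpha_i \implc{c} \beta_i)$, with $c$ the minimum of the negative values used. By the classical deduction theorem one then obtains $\mathcal{T}_i \cup \{\alpha_i \implc{c'} \beta_i\} \provesLGI (\alpha_i \implc{c} \beta_i) \lor \Phi$. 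Since $c$ lies outside $C_i$, one also has $\mathcal{T}_i \provesLGI (\alpha_i \implc{c} \beta_i) \impl \Phi$, and combining these yields $\mathcal{T}_i \cup \{\alpha_i \implc{c'} \beta_i\} \provesLGI \Phi$, contradicting $c' \in C_i$. The downward closure of $C_i$ together with $c^*_i = \sup C_i$ is exactly what ensures that $c' \in C_i$ in both the attained and the unattained sub-case; the degenerate variants in which only positive or only negative instances are used are similar but simpler.
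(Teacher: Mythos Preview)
Your proposal is correct, and it reaches the same goal as the paper while organising the construction differently. The paper first runs a standard Lindenbaum construction over the countably many graded implications with \emph{rational} weight $c$, obtaining a theory $\mathcal T'$ that decides $\alpha \implc{c} \beta$ for every rational $c$; it then invokes Lemma~\ref{lem:making-weight-smaller} (and its contrapositive) to propagate these decisions to all irrational $c$, leaving at most one undecided value per pair $(\alpha,\beta)$; finally it decides those countably many leftover boundary values and closes under provability. Your approach instead enumerates the pairs $(\alpha,\beta)$ themselves and, at each stage, computes the threshold $c^*_i=\sup C_i$ and adds the full family for that pair in one shot.

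The trade-off is clear. The paper's route keeps each inductive step trivial (one formula, the usual ``if both extensions prove $\Phi$ then $\Phi$ was already provable'' argument) at the cost of a three-phase construction. Your route is a single pass, but the invariant-preservation step is more substantial: you must consolidate finitely many new hypotheses into one positive instance with weight $c'\in C_i$ and one negative instance with weight $c\notin C_i$, and then combine the deduction-theorem consequence with $\mathcal T_i\provesLGI(\alpha_i\implc{c}\beta_i)\impl\Phi$. That argument is sound as you wrote it; the downward closure of $C_i$ together with $c^*_i=\sup C_i$ indeed guarantees $c'\in C_i$ and $c\notin C_i$ in both the attained and unattained sub-case. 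One small point you might make explicit: to obtain a \emph{complete} theory in the sense used later (every formula or its negation provable), you should close $\bar{\mathcal T}$ under provable consequence, as the paper does in its final line; deciding all graded implications determines every Boolean combination, but only after closure.
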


\begin{proof}
Let $(\alpha_i, \beta_i, c_i)$, $i \in \Naturals$, be an enumeration of all triples consisting of two basic expressions and a rational element of $[0,1]$. Let ${\mathcal T}_0 = \mathcal T$; for each $i = 0, \ldots$, choose ${\mathcal T}_{i+1}$ to be either ${\mathcal T}_i \cup \{ \alpha_i \implc{c_i} \beta_i \}$ or ${\mathcal T}_i \cup \{ \lnot{(\alpha_i \implc{c_i} \beta_i)} \}$, to the effect that $\Phi$ is not provable from ${\mathcal T}_{i+1}$; and let ${\mathcal T}' = \bigcup_i {\mathcal T}_i$.

By Lemma \ref{lem:making-weight-smaller}, \LGI{} proves $(\alpha \implc{d} \beta) \impl (\alpha \implc{c} \beta)$ for any basic expressions $\alpha$ and $\beta$ and any $c, d \in [0,1]$ such that $c \leq d$. Accordingly, we add to ${\mathcal T}'$ any graded implication $\alpha \implc{c} \beta$ such that $\alpha \implc{d} \beta$ is in ${\mathcal T}'$ for some $d > c$. Likewise, we add $\lnot{(\alpha \implc{c} \beta)}$ whenever $\lnot{(\alpha \implc{d} \beta)}$ is in ${\mathcal T}'$ for some $d < c$. Let ${\mathcal T}''$ be the resulting theory. Obviously, $\Phi$ is not provable from ${\mathcal T}''$.

For any pair of basic expressions $\alpha, \beta$, we then have that for at most one $c \in [0,1]$ neither $\alpha \implc{c} \beta$ nor $\lnot{(\alpha \implc{c} \beta)}$ is in ${\mathcal T}''$. Hence may again successively extend ${\mathcal T}''$ to a theory ${\mathcal T}'''$ such that $\Phi$ is not provable from ${\mathcal T}'''$ and, for any $\alpha, \beta$ and any $c \in [0,1]$, either $\alpha \implc{c} \beta$ or $\lnot{(\alpha \implc{c} \beta)}$ is in ${\mathcal T}'''$. Finally, closing ${\mathcal T}'''$ under the formulas provable from it, we get a theory as desired.
\end{proof}

\begin{theorem} \label{thm:completeness-LGI}
Let $\mathcal T$ be a theory consisting of graded implications, and let $\zeta \implc{e} \eta$ be a graded implication of \LGI. If ${\mathcal T} \entails \zeta \implc{e} \eta$, then ${\mathcal T} \provesLGI \zeta \implc{t} \eta$ for any $t < e$.
\end{theorem}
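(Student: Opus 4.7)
The natural strategy is a Pavelka-style Henkin construction: assume contrapositively that for some $t < e$ we have ${\mathcal T} \doesnotprove_{\LGI} \zeta \implc{t} \eta$, and then build a single evaluation $v$ that satisfies every member of $\mathcal T$ yet fails $\zeta \implc{e} \eta$. First I would invoke Lemma \ref{lem:theorem-extension} to embed $\mathcal T$ into a complete theory $\bar{\mathcal T}$ from which $\zeta \implc{t} \eta$ is still unprovable; by completeness $\neg(\zeta \implc{t} \eta) \in \bar{\mathcal T}$, and for every pair $(\alpha,c)$ exactly one of $\true \implc{c} \alpha$, $\alpha \implc{\stneg c} \false$ lies in $\bar{\mathcal T}$.

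For each basic expression $\alpha$ I define
\[ v(\alpha) \;=\; \sup\{d \in [0,1] : (\true \implc{d} \alpha) \in \bar{\mathcal T}\}. \]
Using (lin$_2$), Lemma \ref{lem:making-weight-smaller}, (trans$_1$) and (inkons), this sup coincides with $\inf\{c : (\alpha \implc{\stneg c} \false) \in \bar{\mathcal T}\}$, so the whole set $\tau(\alpha, v(\alpha))$ is provable from $\bar{\mathcal T}$. The axioms ($\true$) and ($\false$) force $v(\true) = 1$ and $v(\false) = 0$. Lemma \ref{lem:explicit-values-preserved-under-connectives}, applied to $\tau(\alpha,v(\alpha))$ and $\tau(\beta,v(\beta))$, then yields provability of $\tau(\alpha\land\beta, v(\alpha)\wedge v(\beta))$, $\tau(\alpha\lor\beta, v(\alpha)\vee v(\beta))$, $\tau(\alpha\tnorm\beta, v(\alpha)\tnorm v(\beta))$ and $\tau(\stneg\alpha,\stneg v(\alpha))$, and the definition of $v$ forces $v$ to take exactly those values on the corresponding compounds. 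So $v$ is a genuine evaluation.

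Next I check that $v$ satisfies every graded implication in $\bar{\mathcal T}$: if $\alpha \implc{c} \beta \in \bar{\mathcal T}$ and $d < v(\alpha)$, then $\true \implc{d} \alpha \in \bar{\mathcal T}$, and (trans$_1$) gives $\true \implc{d \Ltnorm c} \beta \in \bar{\mathcal T}$, whence $d \Ltnorm c \leq v(\beta)$; taking sup over $d$ yields $v(\alpha) \Ltnorm c \leq v(\beta)$. In particular $v$ satisfies all of $\mathcal T$. Finally I apply Lemma \ref{lem:explicit-values-fulfil-graded-implication} to $\zeta, \eta$: since $\zeta \implc{t} \eta$ is not provable from $\bar{\mathcal T}$, the first clause of that lemma fails, so $t \geq 1 - v(\zeta) + v(\eta)$; combining with $t < e$ gives $v(\zeta) > v(\eta) + \stneg e$, so $v$ refutes $\zeta \implc{e} \eta$. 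This contradicts $\mathcal T \entails \zeta \implc{e} \eta$ and closes the proof.

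The genuinely delicate step is the verification that $v$ is an evaluation, and in particular the coincidence of the sup and inf definitions of $v(\alpha)$: everything downstream leans on $\tau(\alpha,v(\alpha))$ being provable from $\bar{\mathcal T}$, which in turn depends on the interplay between (lin$_2$), monotonicity in the weight, and the consistency axiom (inkons). The remaining arguments are direct applications of Lemmas \ref{lem:explicit-values-preserved-under-connectives} and \ref{lem:explicit-values-fulfil-graded-implication}.
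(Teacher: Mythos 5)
Your proposal is correct and follows essentially the same route as the paper's own proof: extend $\mathcal T$ to a complete theory via Lemma \ref{lem:theorem-extension}, read off an evaluation $v$ from the provable sets $\tau(\alpha,v(\alpha))$ using (lin$_2$) and Lemma \ref{lem:making-weight-smaller}, verify it is an evaluation via Lemma \ref{lem:explicit-values-preserved-under-connectives}, and conclude with Lemma \ref{lem:explicit-values-fulfil-graded-implication}; you merely make explicit the $\sup/\inf$ characterisation of $v(\alpha)$ and the direct satisfaction check that the paper leaves implicit. The only quibble is your claim that \emph{exactly} one of $\true \implc{c} \alpha$ and $\alpha \implc{\stneg c} \false$ lies in $\bar{\mathcal T}$ --- at the borderline value $c = v(\alpha)$ both may be provable --- but your argument only uses ``at least one'' together with (inkons), so nothing is affected.
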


\begin{proof}
Assume to the contrary that, in \LGI, there is a $t < e$ such that there is no proof of $\zeta \implc{t} \eta$ from $\mathcal T$. We shall show that there is an evaluation satisfying all elements of $\mathcal T$ but not $\zeta \implc{e} \eta$.

By Lemma \ref{lem:theorem-extension}, we can extend $\mathcal T$ to a theory $\bar{\mathcal T}$ such that also $\bar{\mathcal T}$ does not prove $\zeta \implc{t} \eta$ in \LGI{} and, for each formula $\Phi$, either $\Phi$ or $\lnot\Phi$ is in $\bar{\mathcal T}$.

Then, for each basic expression $\alpha$, there is by (lin$_2$) and Lemma \ref{lem:making-weight-smaller} a $v(\alpha) \in [0,1]$ such that $\tau(\alpha, v(\alpha))$ is provable from $\bar{\mathcal T}$. By Lemma \ref{lem:explicit-values-preserved-under-connectives}, $v$ is an evaluation. Moreover, by Lemma \ref{lem:explicit-values-fulfil-graded-implication}, $\bar{\mathcal T}$ proves a graded implication $\alpha \implc{c} \beta$ if $c < 1 - v(\alpha) + v(\beta)$, and does not prove it if $c > 1 - v(\alpha) + v(\beta)$. Consequently, if $\alpha \implc{c} \beta$ is contained in $\bar{\mathcal T}$, we have $c \leq 1 - v(\alpha) + v(\beta)$, that is, $v(\alpha) \leq v(\beta) + \stneg c$, and $\alpha \implc{c} \beta$ is satisfied by $v$. Moreover, since $\bar{\mathcal T}$ does not prove $\zeta \implc{t} \eta$, we have that $1 - v(\zeta) + v(\eta) \leq t < e$. This in turn means that $v$ does not satisfy $\zeta \implc{e} \eta$.
\end{proof}

Having restricted to a Pavelka-style completeness, we note that, in principle, the possibility exists to extend the calculus such that strong completeness, in the usual sense, can be established. Namely, we may add the following infinitary rule to \LGI:
\[ \frac{(\true \implc{c} \alpha) \impl \Phi \Add{for all $c > d$}
\quad\quad
(\alpha \implc{\stneg d} \false) \impl \Phi}{\Phi}, \]
where $\Phi$ is a formula, $\alpha$ is a basic expression, and $d \in [0,1]$. Such an alternative way, however, seems to be less appealing. A rule with a set of assumptions that is not even countable might be considered as not suitable for the present context.

\subsection{Adding mean-values}

We now modify the logic \LGI{} so as to have the possibility to refer to mean values. The obtained logic will be called \LGIM{}. 

A {\it generalised graded implication} is a triple consisting of a multiset $\alpha_1, \ldots, \alpha_n$ of basic expressions, a further basic expression $\beta$, and a real $c \in [0,1]$; we write
\begin{equation} \label{fml:generalised-graded-implication}
\alpha_1, \ldots, \alpha_n \implc{c} \beta.
\end{equation}
A formula of \LGIM{} is defined similarly as a formula of \LGI, but this time built up from generalised graded implications.

For real numbers $r_1, \ldots, r_n$, $n \geq 1$, let us write
\[ \average{r_1, \ldots, r_n} \;=\; \frac{r_1 + \ldots + r_n}n. \]
We define the generalised graded implication (\ref{fml:generalised-graded-implication}) to be satisfied by some evaluation $v$ if
\begin{equation} \label{fml:generalised-graded-implication-satisfied}
\average{v(\alpha_1), \ldots, v(\alpha_n)} \leq v(\beta) + \stneg c.
\end{equation}
The satisfaction of formulas and the semantic entailment relation are defined for \LGIM{} similarly as for \LGI, but such that (\ref{fml:generalised-graded-implication-satisfied}) is taken into account. We denote the entailment relation again by $\entails$.

\begin{definition}
The calculus \LGIM{} consists of all axioms and rules belonging to \LGI{} as well as the following axioms, for all basic expressions $\alpha, \alpha_1, \ldots, \alpha_n, \beta_1, \ldots, \beta_n, \gamma$ and any $c, c_1, \ldots, c_n, d \in [0,1]$:
\begin{align*}
& \Rule{trans$\averagealone_1$} (\alpha_1 \implc{c_1} \beta_1) \land \ldots 
\land (\alpha_n \implc{c_n} \beta_n) 
\land (\beta_1, \ldots, \beta_n \implc{d} \gamma) \impl
(\alpha_1, \ldots, \alpha_n \implc{\average{c_1, \ldots, c_n} \Ltnorm d} \gamma) \\
& \Rule{trans$\averagealone_2$} (\alpha_1, \ldots, \alpha_n \implc{c} \beta) \land (\beta \implc{d} \gamma) \impl
(\alpha_1, \ldots, \alpha_n \implc{c \Ltnorm d} \gamma) \\
& \Rule{trans$\averagealone_3$} (\alpha_1 \implc{c_1} \false) \land \ldots 
\land (\alpha_n \implc{c_n} \false) 
\land (\true \implc{d} \beta) \impl
(\alpha_1, \ldots, \alpha_n \implc{\average{c_1, \ldots, c_n} \Ltconorm d} \beta) \\
& \Rule{$\true\averagealone$} (\true, \ldots, \true \implc{c} \alpha) \impl (\true \implc{c} \alpha).
\end{align*}
We write ${\mathcal T} \provesLGIM \Phi$ if there is a proof of $\Phi$ from $\mathcal T$ in \LGIM.
\end{definition}

\begin{theorem} \label{thm:soundness-LGIM}
Let $\mathcal T$ be a theory and $\Phi$ a formula of \LGIM. If ${\mathcal T} \provesLGIM \Phi$, then ${\mathcal T} \entails \Phi$.
\end{theorem}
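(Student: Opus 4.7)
The plan is to mimic the proof of Theorem \ref{thm:soundness-LGI} and argue by induction on the length of the derivation: the rule (MP) and the classical-tautology scheme are treated exactly as before because the outer layer is still two-valued, and every axiom that already belongs to \LGI{} remains sound since generalised graded implications with $n=1$ reduce to ordinary graded implications. The only real work, therefore, is to verify that the four new axioms (trans$\averagealone_1$), (trans$\averagealone_2$), (trans$\averagealone_3$), and ($\true\averagealone$) are sound under the semantics (\ref{fml:generalised-graded-implication-satisfied}).

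For each of the four schemes I would fix an arbitrary evaluation $v$ satisfying the antecedent and then check, by direct computation, that it also satisfies the consequent. The key observation that makes the mean interact well with the tolerance reading is the identity $\average{v(\beta_1)+\stneg c_1,\ldots,v(\beta_n)+\stneg c_n} = \average{v(\beta_i)} + \stneg \average{c_i}$, so that averaging the $n$ pointwise bounds $v(\alpha_i) \leq v(\beta_i) + \stneg c_i$ yields $\average{v(\alpha_i)} \leq \average{v(\beta_i)} + \stneg \average{c_i}$; combining this with the satisfaction of the remaining premise via $\average{v(\beta_i)} \leq v(\gamma)+\stneg d$ gives the bound $\average{v(\alpha_i)} \leq v(\gamma) + \stneg d + \stneg \average{c_i}$ required for (trans$\averagealone_1$). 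The analogous chain $\average{v(\alpha_i)} \leq v(\beta) + \stneg c \leq v(\gamma) + \stneg c + \stneg d$ handles (trans$\averagealone_2$), and for (trans$\averagealone_3$) one feeds $\false$ into the $\beta_i$ and $\true$ into the premise about $\beta$ to obtain $\average{v(\alpha_i)} \leq 1 - \average{c_i}$ and $d \leq v(\beta)$. Axiom ($\true\averagealone$) is immediate because $\average{1,\ldots,1} = 1$.

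The one subtlety is the truncation that occurs in $\Ltnorm$ and $\Ltconorm$: for (trans$\averagealone_1$) and (trans$\averagealone_2$) the desired tolerance is $\stneg(\average{c_i}\Ltnorm d)$, and for (trans$\averagealone_3$) it is $\stneg(\average{c_i}\Ltconorm d)$. I would therefore split into two cases in each argument. When $\average{c_i}+d \geq 1$ (respectively $\leq 1$), the truncation is inactive and the arithmetic matches the bound derived above exactly. When the truncation fires, the claimed inequality degenerates to either $\average{v(\alpha_i)} \leq v(\gamma)+1$ or $\average{v(\alpha_i)} \leq v(\beta)$; the former holds trivially since values lie in $[0,1]$, and for the latter one observes that the hypotheses already force $\average{v(\alpha_i)} \leq 1 - \average{c_i} < d \leq v(\beta)$.

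I expect this case split around the Łukasiewicz truncation to be the only place where care is needed; everything else is bookkeeping with averages of real numbers in $[0,1]$, and no new ideas beyond the soundness proof for \LGI{} are required.
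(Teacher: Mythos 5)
Your proposal is correct and follows the same route as the paper, which simply asserts that the additional axioms (trans$\averagealone_1$)--(trans$\averagealone_3$) and ($\true\averagealone$) are sound and then invokes Theorem \ref{thm:soundness-LGI}; you have merely supplied the omitted computations. Your handling of the \L ukasiewicz truncation cases checks out (in particular the chain $\average{v(\alpha_i)} \leq 1 - \average{c_i} < d \leq v(\beta)$ for the truncated case of (trans$\averagealone_3$)), so nothing further is needed.
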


\begin{proof}
It is not difficult to check that the additional rules are sound. Hence the assertion follows from Theorem \ref{thm:soundness-LGI}.
\end{proof}

To adapt our completeness theorem, we need to extend Lemma \ref{lem:explicit-values-fulfil-graded-implication} to include generalised graded implications.

\begin{lemma} \label{lem:explicit-values-fulfil-graded-implication-with-mean}
Let $\mathcal T$ be a theory, let $\alpha_1 \ldots, \alpha_n, \beta$ be basic expressions, and let $c_1, \ldots, c_n, d \in [0,1]$. Assume that $\tau(\alpha_1,c_1)$, \ldots, $\tau(\alpha_n,c_n)$ and $\tau(\beta,d)$ are provable from $\mathcal T$. Then ${\mathcal T} \provesLGIM \alpha_1, \ldots, \alpha_n \implc{r} \beta$ if $r < 1 - \average{c_1, \ldots, c_n} + d$, and ${\mathcal T} \provesLGIM \neg(\alpha_1, \ldots, \alpha_n \implc{r} \beta)$ if $r > 1 - \average{c_1, \ldots, c_n} + d$.
\end{lemma}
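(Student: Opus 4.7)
The plan is to mirror the proof of Lemma~\ref{lem:explicit-values-fulfil-graded-implication}, substituting the mean-value transitivity axioms (trans$\averagealone_1$), (trans$\averagealone_3$), and the contraction axiom ($\true\averagealone$) wherever the original proof invokes (trans$_1$) or (trans$_2$). The Łukasiewicz arithmetic is essentially unchanged; the only new ingredient is that the left-hand weight is now an arithmetic mean $\average{c_1,\ldots,c_n}$.

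For the positive direction, given $r < 1 - \average{c_1,\ldots,c_n} + d$, I would choose $s_i > c_i$ and $t < d$ (using rule~(0) to supply a trivial implication whenever $c_i = 1$ or $d = 0$ leaves the corresponding part of $\tau$ empty) so that $(1 - \average{s_1,\ldots,s_n}) + t \geq r$, which is possible by continuity. The hypotheses yield $\alpha_i \implc{\stneg s_i} \false$ and $\true \implc{t} \beta$; one application of (trans$\averagealone_3$) then produces $\alpha_1,\ldots,\alpha_n \implc{q} \beta$ with $q = \average{\stneg s_i} \Ltconorm t \geq r$. To lower the weight from $q$ to $r$ I would apply (trans$\averagealone_2$) with $\gamma := \beta$ together with the $(c)$-axiom $\beta \implc{1-q+r} \beta$, obtaining $\alpha_1,\ldots,\alpha_n \implc{r} \beta$.

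For the negative direction, given $r > 1 - \average{c_i} + d$, I would first note that $\average{c_i} > 0$ and $d < 1$ are forced since $r \in [0,1]$. Then I would choose $s_i < c_i$ (using rule~(0) when $c_i = 0$) and $t > d$ with $r > 1 - \average{s_i} + t$, and extract $\true \implc{s_i} \alpha_i$ and $\beta \implc{\stneg t} \false$ from $\tau$. Assuming hypothetically $\alpha_1,\ldots,\alpha_n \implc{r} \beta$, the axiom (trans$\averagealone_1$) followed by ($\true\averagealone$) gives $\true \implc{\average{s_i} \Ltnorm r} \beta$; chaining this with $\beta \implc{\stneg t} \false$ via (trans$_1$) yields $\true \implc{(\average{s_i} \Ltnorm r) \Ltnorm \stneg t} \false$, whose weight evaluates to the strictly positive number $\average{s_i} + r - t - 1$, contradicting (inkons). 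Classical contraposition at the outer level delivers $\neg(\alpha_1,\ldots,\alpha_n \implc{r} \beta)$.

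The principal obstacle is purely administrative: enumerating the boundary cases where some $c_i$ or $d$ equals $0$ or $1$ (so that one side of the corresponding $\tau$ is empty) and plugging the gap each time with the trivial implication furnished by rule~(0), while verifying that the Łukasiewicz combinations still hit the intended target weights. No new conceptual device beyond those already used in Lemma~\ref{lem:explicit-values-fulfil-graded-implication} appears to be required.
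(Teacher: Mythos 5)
Your proposal is correct and follows essentially the same route as the paper, whose proof consists precisely of the instruction to argue in analogy to Lemma \ref{lem:explicit-values-fulfil-graded-implication} using (trans$\averagealone_1$)--(trans$\averagealone_3$) and ($\true\averagealone$); you have simply made the analogy explicit, including the boundary cases handled via rule (0). The only cosmetic deviation is that you overshoot the target weight and then lower it with (trans$\averagealone_2$) and axiom ($c$), where the paper's pattern would choose $s_i$ and $t$ to hit $r$ exactly; both variants are sound.
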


\begin{proof}
We proceed in analogy to the proof of Lemma \ref{lem:explicit-values-fulfil-graded-implication}, using this time the rules (trans$\averagealone_1$)--(trans$\averagealone_3$) and ($\true\averagealone$).
\end{proof}

We are now in the position to show a Pavelka-style completeness theorem for \LGIM.

\begin{theorem} \label{thm:completeness-LGIM}
Let $\mathcal T$ be a theory consisting of generalised graded implications, and let $\zeta_1, \ldots, \zeta_n \implc{e} \eta$ be a generalised graded implication of \LGIM. If ${\mathcal T} \entails \zeta_1, \ldots, \zeta_n \implc{e} \eta$, then ${\mathcal T} \provesLGIM \zeta_1, \ldots, \zeta_n \implc{t} \eta$ for any $t < e$.
\end{theorem}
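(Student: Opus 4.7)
The plan is to imitate the proof of Theorem \ref{thm:completeness-LGI} step by step, substituting generalised graded implications wherever ordinary ones occurred and invoking Lemma \ref{lem:explicit-values-fulfil-graded-implication-with-mean} in place of Lemma \ref{lem:explicit-values-fulfil-graded-implication}. Concretely, I would assume for contradiction that there exists some $t < e$ such that $\zeta_1, \ldots, \zeta_n \implc{t} \eta$ is not provable from $\mathcal{T}$ in \LGIM, and aim to produce an evaluation $v$ which satisfies every element of $\mathcal{T}$ but falsifies $\zeta_1, \ldots, \zeta_n \implc{e} \eta$.

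The first preparatory step is to adapt Lemma \ref{lem:theorem-extension} to \LGIM. Here I would enumerate all quadruples consisting of a finite multiset of basic expressions, a further basic expression, and a rational $c \in [0,1]$, and at each stage add either the corresponding generalised graded implication or its negation, preserving the non-provability of $\zeta_1, \ldots, \zeta_n \implc{t} \eta$. To make the ``downward closure'' step go through, I need the analogue of Lemma \ref{lem:making-weight-smaller} for generalised graded implications; this is immediate from (trans$\averagealone_2$) combined with ($c$), since $d \Ltnorm (1+c-d) = c$ when $c \leq d$. Closing under provability then yields a complete extension $\bar{\mathcal{T}}$ of $\mathcal{T}$ that still does not prove $\zeta_1, \ldots, \zeta_n \implc{t} \eta$.

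Next, for every basic expression $\alpha$, using (lin$_2$) together with the weight-lowering fact, I would select a value $v(\alpha) \in [0,1]$ such that $\tau(\alpha, v(\alpha))$ is provable from $\bar{\mathcal{T}}$. Since $\tau$ is phrased in terms of ordinary (single-antecedent) graded implications, Lemma \ref{lem:explicit-values-preserved-under-connectives} continues to apply without modification and yields that $v$ is an evaluation.

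Finally, I apply Lemma \ref{lem:explicit-values-fulfil-graded-implication-with-mean}: any generalised graded implication contained in $\bar{\mathcal{T}}$ satisfies $c \leq 1 - \average{v(\alpha_1),\ldots,v(\alpha_n)} + v(\beta)$, equivalently $\average{v(\alpha_1),\ldots,v(\alpha_n)} \leq v(\beta) + \stneg c$, so it is satisfied by $v$; in particular, all of $\mathcal{T}$ is. On the other hand, since $\zeta_1, \ldots, \zeta_n \implc{t} \eta$ is not provable from $\bar{\mathcal{T}}$, the same lemma gives $1 - \average{v(\zeta_1),\ldots,v(\zeta_n)} + v(\eta) \leq t < e$, so $v$ does not satisfy $\zeta_1, \ldots, \zeta_n \implc{e} \eta$, contradicting $\mathcal{T} \entails \zeta_1, \ldots, \zeta_n \implc{e} \eta$. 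The main point requiring care is the extension lemma for \LGIM, but aside from checking that weight-lowering still holds for generalised graded implications, it is a routine transcription of the \LGI{} argument.
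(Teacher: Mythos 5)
Your proposal is correct and follows essentially the same route as the paper, whose proof consists only of the remark that one argues in analogy to Theorem \ref{thm:completeness-LGI} using Lemma \ref{lem:explicit-values-fulfil-graded-implication-with-mean}; your write-up simply makes that analogy explicit, correctly identifying the two points the paper leaves tacit (the extension lemma re-run over multiset-antecedent implications, and weight-lowering for generalised graded implications via (trans$\averagealone_2$) together with ($c$), since $d \Ltnorm (1+c-d) = c$ for $c \leq d$).
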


\begin{proof}
Given Lemma \ref{lem:explicit-values-fulfil-graded-implication-with-mean}, we can argue in analogy to the proof of Theorem \ref{thm:completeness-LGI}.
\end{proof}

\subsection{Application of \LGIM{} to questionnaires}

We now show how the intended reasoning in the context of questionnaire score calculation can be reproduced in the logic \LGIM.

We choose the variables of \LGIM{} such that each one refers to a clinical entity, such as a symptom or a disorder. For formulas of the form $\true \implc{c} \alpha$ or $\alpha \implc{\stneg c} \false$, where $\alpha$ is a variable and $c \in [0,1]$, the intended meaning is that $\alpha$ applies to a patient to the degree at least or at most $c$, respectively.

Let us assume that we are given a questionnaire containing $n \geq 1$ items. We consider each item as a symptom; let $\phi_1, \ldots, \phi_n$ be the corresponding variables. Let us furthermore assume that the questionnaire assesses one disorder; let $\delta$ be the variable referring to it.

We want to express in \LGIM{} that the truth degree of $\delta$ is the average of the degrees of $\phi_1, \ldots, \phi_n$. This is quite straightforward. The formula
\begin{equation} \label{fml:average-lower-bound}
\phi_1, \ldots, \phi_n \implc{1} \delta
\end{equation}
is satisfied by some evaluation $v$ if and only if $\average{v(\phi_1), \ldots, v(\phi_n)} \leq v(\delta)$. Furthermore,
\begin{equation} \label{fml:average-upper-bound}
\stneg\phi_1, \ldots, \stneg\phi_n \implc{1} \stneg\delta
\end{equation}
is satisfied by $v$ if and only if $\frac 1 n ((1-v(\phi_1)) + \ldots + (1-v(\phi_n))) \leq 1 - v(\delta)$ if and only if $1-\average{v(\phi_1), \ldots, v(\phi_n)} \leq 1-v(\delta)$ if and only if $\average{v(\phi_1), \ldots, v(\phi_n)} \geq v(\delta)$. Consequently, letting the theory $\mathcal T$ consist of (\ref{fml:average-lower-bound}), (\ref{fml:average-upper-bound}), as well as
\begin{align*}
\phi_1 \implc{\stneg c_1} \false, \;\; \ldots, \;\; \phi_n \implc{\stneg c_n} \false, \;\;
\true \implc{c_1} \phi_1, \;\; \ldots, \;\; \true \implc{c_n} \phi_n,
\end{align*}
we can derive from $\mathcal T$ the graded implications
\[ \delta \implc{\stneg d} \false, \quad \true \implc{d} \delta, \]
where $d$ is the mean value of $c_1, \ldots, c_n$.

In fact, from $\true \implc{c_1} \phi_1, \;\; \ldots, \;\; \true \implc{c_n} \phi_n$ and (\ref{fml:average-lower-bound}), we get $\true, \ldots, \true \implc{\average{c_1, \ldots, c_n}} \delta$ by (trans$\averagealone_1$) and $\true \implc{\average{c_1, \ldots, c_n}} \delta$ by ($\true\averagealone$), that is, $\true \implc{d} \delta$. Similarly, we get $\true \implc{\stneg c_1} \stneg\phi_1, \;\; \ldots, \;\; \true \implc{\stneg c_n} \stneg\phi_n$ from ($\stneg_1$) and proceed similarly as before to derive $\true \implc{\average{\stneg c_1, \ldots, \stneg c_n}} \stneg\delta$ and thus $\delta \implc{\average{\stneg c_1, \ldots, \stneg c_n}} \false$, that is, $\delta \implc{\stneg d} \false$.

\section{A logic of prototypes and counterexamples}
\label{sec:prototypes-counterexamples}

We have shown that the fuzzy logic \LGIM{} presented in the previous section is suitable to emulate the calculations that are performed for the evaluation of certain medical assessment questionnaires. Consequently, \LGIM{} could be regarded as an appropriate answer to our concern of defining a logical framework for this particular medical context. The approach has, however, certain weaknesses. First of all, we should admit the arbitrariness of several design choices; this is a problem that we encounter in mathematical fuzzy logic inevitably. Second, while extending this framework to other scoring methods than the mean-value is possible in principle, it implies  significant modifications. Finally -- and this is a subjective issue -- the framework of mathematical fuzzy logic may not be found very intuitive and thus not very appealing for practitioners designing medical assessment questionnaires. In particular, we are not likely to explain the meaning of the statements and derivations made in \LGIM{} to somebody without an appropriate background in many-valued logic.

In reply to these arguments, we investigate in this section the question if the same result -- the emulation of calculation of assessment scores -- can be obtained in a framework that addresses the characteristic features of the application in a more appropriate way. We have in particular the aspect of vagueness in mind. The alternative approach that we introduce in the sequel is based on the idea of modelling the prototypes of a vague property by a subset of a metric space \cite{dubois1997three}. Certain ideas originating from approximate reasoning \cite{godo2008logical,esteva2012strong} play a role as well.

\subsection{Modelling vague properties in metric spaces}

A vague property, like ``tall'' for a human being, refers to a size, but does not correspond to a partition of the set of all sizes. In fact, the notion ``tall'' refers to a lower level of granularity than the elements of the real interval $[0,250]$ when used to indicate sizes in centimeters. In particular, the property of being ``tall'' cannot be identified with a subset of $[0,250]$ because there is no smallest size to be considered as tall.

It might be less difficult, however, to choose a subset of $[0,250]$ that is not supposed to consist of all sizes to be considered ``tall'' but only those unquestionably to be considered ``tall''; we speak about the {\it prototypical} cases then. In addition, we can endow the set $[0,250]$ of all sizes with a similarity relation such that increasing distance from the set of prototypes reflects the decreasing tendency to call somebody with that size tall.

This idea of modelling vague notions can be regarded as fundamental in fuzzy set theory. In the seminal paper \cite{dubois1997three}, three different views on fuzzy sets are specified and one of these views coincides with our present approach. A series of formalisms based on the idea were developed; we may mention, e.g., \cite{lawry2009prototype}. It can certainly be argued that the choice of prototypes involves arbitrariness as well. When combining two levels of granularities, arbitrariness can in fact never be avoided. However, we avoid that a choice about the truth degree must be made for every element of the base set.

The similarity-based approach to fuzzy sets has, however, a disadvantage. The degree assigned to a particular element of the base set depends on its distance from the set of prototypes; consequently, all fuzzy sets are ``equally steep''. This is counterintuitive. Indeed, it makes a difference if the set of prototypes is large and models some very general property, or the set is small and models a more specific property. The neighborhood of the set of prototypes that contains the elements mapped to non-zero degrees should in the latter case be narrower than in the former.

There is a straightforward way to overcome this difficulty. A vague property is not only characterised by its prototypes, but also by its counterexamples. By a counterexample, we mean a case in which the property under consideration clearly does not hold. In this case, we model properties by pairs of sets, the first of which contains the prototypes, the second one the counterexamples. We are led to a model based on disjoint subsets; see, e.g., \cite{ciucci2014threevalued} for a detailed discussion.

This idea is still well in line with fuzzy set theory. It belongs, e.g., to the essential constituents in V.\ Nov\' ak's theory of trichotomous evaluative linguistic expressions \cite{novak2008trichotomous,novak1999principlesfuzzylogic}. Recall that a fuzzy set modelling a vague property maps the prototypical elements to $1$ and the counterexamples to $0$. Given the prototypes and counterexamples, the borderline cases can moreover be handled by means of a metric structure. In the linguistic context, an S-shaped fuzzy set has been established as most appropriate. Often, however, the remaining grades are determined on the basis of a simple linear interpolation: then each element is mapped to the distance from the counterexamples divided by the sum of the distances from the prototypes and counterexamples.

The idea of identifying properties with the sets of their prototypes and counterexamples might be found appealing; there is, however, no straightforward way to design on its basis a method of reasoning about vague properties. One point seems to be clear: fuzzy logic understood as t-norm-based many-valued logic is not suitable. Below we demonstrate how reasoning in the questionnaire context can be represented along the indicated lines.

\subsection{A simple prototype-counterexample logic}
\label{sec:logic-for-questionnaires}

For the sake of the considered application, it is natural to identify the universe of discourse with the totality of possible outcomes of a questionnaire-based interview and thus with the set of all truth value assignments of a certain finite set of variables. We specify in the sequel a formalism on this basis, which we shall denote by \Q.

\Q{} is specified syntactically as follows. We fix $n \geq 1$ and we let the symbols $\phi_1, \ldots, \phi_n$ be our \emph{basic variables}. Furthermore, we choose a countably infinite set $\alpha_1, \alpha_2, \ldots$ of \emph{dependent variables}. All variables are assumed to model vague properties and are hence subject to an assignment with graded truth degrees. This will be done in the following explicit way. Again, we denote by $[0,1]$ the real unit interval. A \emph{graded variable} is an expression of the form $(\alpha, c)$, where $\alpha$ is a variable and $c \in [0,1]$.

A \emph{formula} of \Q{} is built up from graded variables by means of the binary connectives $\land, \lor$ and the unary connective $\lnot$. Their meaning is the classical ``and'', ``or'', and ``not'', respectively. Note that we do not compose the variables themselves by logical connectives but only variables endowed with a degree.

The semantics of \Q{} is based on the following considerations. As common in degree-based propositional logic, the variables together with an assignment of truth values are intended to describe one out of a set of possible situations; technically, we speak of possible worlds. Here, we will assume that each possible world is specifiable by the truth degrees of the basic variables alone. That is, we assume that there is a one-to-one correspondence between the assignments of the basic variables with truth values and the set of worlds. The truth degree of each dependent variable is in turn assumed to depend on the truth degree of the basic variables.

Accordingly, we proceed as follows. We define $W = [0,1]^n$ to be the \emph{set of worlds}. We measure the distance between two worlds by the sum of the differences of the $n$ components. That is, we endow $W$ with the metric $d \colon W \times W \to \Reals^+$ defined by
\begin{equation} \label{fml:metric}
d( (c_1, \ldots, c_n), (d_1, \ldots, d_n) ) \;=\; |c_1-d_1| + \ldots + |c_n-d_n|,
\end{equation}
where $c_1, \ldots, c_n, d_1, \ldots, d_n \in [0,1]$. We additionally define the distance of a $w \in W$ from some $A \subseteq W$ by $d(w,A) = \inf\; \{ d(w,a) \colon a \in A \}$.

$W$ is our fixed domain of interpretation. An \emph{evaluation} maps each variable $\alpha$ to a pair $(\alpha^+, \alpha^-)$ of disjoint non-empty closed subsets of $W$. For each basic variable $\phi_i$, we require that
\begin{align*}
& \phi_i^+ \;=\; \{ (a_1, \ldots, a_n) \colon a_i = 1 \}, \\
& \phi_i^- \;=\; \{ (a_1, \ldots, a_n) \colon a_i = 0 \}.
\end{align*}
The intended meaning is that $\alpha^+$ contains the prototypes and $\alpha^-$ contains the counterexamples of $\alpha$.

We can then associate with $\alpha$ a fuzzy set according to the approach outlined above. To this end, we consider the distance of a world $w$ from the set of prototypes as well as from the set of counterexamples. Namely, at $w \in W$, we say that $\alpha$ holds to the degree
\[ 1 - \frac{d(w,\alpha^+)}{d(w,\alpha^+) + d(w,\alpha^-)} \;=\; \frac{d(w,\alpha^-)}{d(w,\alpha^+) + d(w,\alpha^-)}.\]
Note that $\alpha$ holds to the degree $1$ if and only if $w \in \alpha^+$, and $\alpha$ holds to the degree $0$ if and only if $w \in \alpha^-$. Furthermore, the basic variable $\phi_i$ holds at a world $(a_1, \ldots, a_n)$ to the degree $a_i$. 

Given an evaluation, we assign to each formula $\Phi$ a subset $[\Phi]$ of $W$ as follows. For each graded variable $(\alpha, c)$, we define
\begin{equation} \label{fml:evaluation-graded-variable}
[(\alpha, c)] \;=\; \{ w \in W \colon \tfrac{d(w,\alpha^-)}{d(w,\alpha^+) + d(w,\alpha^-)} = c \},
\end{equation}
and for a compound formula $\Phi$, we define $[\Phi]$ such that the connectives $\land, \lor, \lnot$ are interpreted by the set-theoretic operations $\cap, \cup, \complement$, respectively. The evaluation is said to {\it satisfy} $\Phi$ if $[\Phi] = W$.

In other words, $[(\alpha, c)]$ consists of those worlds at which $\alpha$ holds to the degree $c$. In particular, for each basic variable $\phi_i$ and $t \in [0,1]$ we have
\[ [(\phi_i,t)] \;=\; \{ (a_1, \ldots, a_n) \in W \colon a_i = t \}. \]
Moreover, a compound formula is satisfied if the corresponding set-theoretical relationship holds. For instance, $[(\phi_1,1) \land (\phi_2,0) \impl (\alpha,1)]$ is satisfied iff  $[(\phi_1,1)] \cap [(\phi_2,0)] \subseteq [(\alpha,1)]$ iff, at every world at which $\phi_1$ holds to the degree $1$ and $\phi_2$ holds to the degree $0$, $\alpha$ holds to the degree $1$.

Finally, a \emph{theory} of \Q{} is a set of formulas. A theory $\mathcal T$ is said to be {\it correct} if there is an evaluation satisfying all elements of $\mathcal T$. We say that a correct theory \emph{entails} a formula $\Phi$ if every evaluation satisfying all elements of $\mathcal T$ also satisfies $\Phi$.

The role of theories of \Q{} may be characterised as follows. Their scope is to specify the dependent variables relative to the independent ones, that is, to determine the truth degree of each dependent variable given the truth degrees of the independent ones. The framework is given by the set $W$ and the metric (\ref{fml:metric}) defined on it. By means of a theory of \Q, we are supposed to specify the sets of prototypes and counterexamples, from which the remaining truth degrees are determined by the interpolative prescription (\ref{fml:evaluation-graded-variable}).

We note that we are in this way led to a viewpoint that differs from the common procedure, e.g., in mathematical fuzzy logic, where truth degrees are seen relative to each other at each single world. In contrast, our approach determines truth values at a given world by reference to truth values at other worlds. Accordingly, we do not allow to exclude worlds, or to consider specific worlds without the remaining ones; we always consider $W$ as a whole. Hence we are interested in those evaluations that assign each element of a theory the whole set $W$. To ensure that such evaluations exist, theories are required to be correct.

We may conclude that our approach assigns to theories a role that is narrower than in other logics. Often, the general facts, like in our context the interpretation of dependent variables, and the special facts, like the propositions holding at a specific world, are not formally distinguished and can both be included in a theory. In \Q, theories are reserved for general facts, referring to the whole fixed set $W$. To reason about specific situations, for instance about the case that a variable holds to a certain degree, we use compound formulas. The example provided in the next subsection will demonstrate that this procedure is actually practicable.

\subsection{Application of \Q{} to questionnaires}
\label{sec:application-to-questionnaires}

We now demonstrate the reasoning in the context of questionnaires in the proposed framework. 

As before, the variables of \Q{} are intended to refer to clinical entities. For a variable $\alpha$ and $c \in [0,1]$, the expression $(\alpha, c)$ means that $\alpha$ applies to a patient to the degree $c$. If $\alpha$ is not vague, $c$ always equals $0$ or $1$.

Assume that we are given a questionnaire containing $n \geq 1$ items and each item can be answered with one out of $k+1$ degrees, $k \geq 1$. We choose the basic variables $\phi_1, \ldots, \phi_n$ in correspondence with the items of the questionnaire. The dependent variables describe properties depending on what the basic variables refer to; we assume that this is case for the syndromes, diseases, or disorders under consideration. Here, we assume again that a single disorder $\delta$ is tested.

We furthermore assume that answers can appear in all combinations. Consequently, it makes sense to define the set of worlds as we do above, by $W = [0,1]^n$. The set of all possible answers certainly corresponds to a finite subset of $W$, namely, $\{ 0, \tfrac 1 k, \ldots, 1 \}^n$.

In this framework, let us specify the disorder $\delta$ under consideration. The items of a questionnaire are chosen such that $\delta$ is fully confirmed if all of them are answered clearly affirmatively. Accordingly, we let the set of prototypes of $\delta$ be the singleton
\[ \delta^+ \;=\; \{ (1, \ldots, 1) \}. \]

The reason for this particular choice of the set of prototypes is the intended correspondence to the context of medical questionnaires, in which the total 
value of 1 is only possible in case the  answers to all items are 1.

Similarly, the disorder is fully excluded only if all questions are answered negatively. In particular, this conclusion is not supposed to be drawn if only some of the answers are negative. Accordingly, we let the set of counterexamples of $\delta$ again consist of only one element:
\[ \delta^- \;=\; \{ (0, \ldots, 0) \}. \]
A world $w = (c_1, \ldots, c_n)$ corresponds to a particular patient answer. We now see that $\delta$ is at $w$ assigned the expected degree, namely,
\begin{equation} \label{fml:patient-answer}
\begin{split}
& \frac{d(w,\delta^-)}{d(w,\delta^+) + d(w,\delta^-)} \\
& =\; \frac{d((c_1, \ldots, c_n),(0,0,0,0))}{d((c_1, \ldots, c_n),(1,1,1,1)) + d((c_1, \ldots, c_n),(0,0,0,0))} \\
& =\; \frac{c_1 + \ldots + c_n}{(1-c_1) + \ldots + (1-c_n) + c_1 + \ldots + c_n} \\
& =\; \tfrac 1 n (c_1 + \ldots + c_n).
\end{split}
\end{equation}
that is, the arithmetic mean of the grades assigned to the items.

To see that \Q{} in fact emulates the calculation of scores from given degrees, let $n = 4$ and denote by $\phi_1, \phi_2, \phi_3, \phi_4$ the four items characterising the syndrome $\delta$ ``depression''. We need a theory specifying $\delta$ for all possible answers to these four items. The syndrome $\delta$ is specified by its sets of prototypes and counterexamples; accordingly, let $\mathcal T$ contain the following two formulas:
\begin{align*}
& (\delta,1) \;\leftrightarrow\; (\phi_1,1) \land (\phi_2,1) \land (\phi_3,1) \land (\phi_4,1), \\
& (\delta,0) \;\leftrightarrow\; (\phi_1,0) \land (\phi_2,0) \land (\phi_3,0) \land (\phi_4,0),
\end{align*}
where $\leftrightarrow$ has the usual meaning. Assume that these two formulas are satisfied by an evaluation. This means
\begin{align*}
& [(\delta,1)] \;=\; [(\phi_1,1)] \cap [(\phi_2,1)] \cap [(\phi_3,1)] \cap [(\phi_4,1)], \\
& [(\delta,0)] \;=\; [(\phi_1,0)] \cap [(\phi_2,0)] \cap [(\phi_3,0)] \cap [(\phi_4,0)];
\end{align*}
that is, $\delta^+ = \{ (1,1,1,1) \}$ and $\delta^- = \{ (0,0,0,0) \}$. Furthermore, $[(\delta,c)]$ is uniquely determined for each $c \in [0,1]$ by (\ref{fml:patient-answer}).

Let now $c_1, c_2, c_3, c_4$ be the answers provided by a patient. We are interested in deriving the consequences of this special fact within \Q. To this end, we explore which implications of the form
\[ (\phi_1, c_1) \wedge (\phi_2, c_2) \wedge (\phi_3, c_3) \wedge (\phi_4, c_4) \impl \Phi \]
$\mathcal T$ entails. We have
\begin{align*}
& [(\phi_1, c_1) \wedge (\phi_2, c_2) \wedge (\phi_3, c_3) \wedge (\phi_4, c_4)] \\
& \;=\; [(\phi_1, c_1)] \cap [(\phi_2, c_2)] \cap [(\phi_3, c_3)] \cap [(\phi_4, c_4)] \;=\;
\{ (c_1, c_2, c_3, c_4) \}
\end{align*}
and by (\ref{fml:patient-answer})
\[ (c_1, c_2, c_3, c_4) \in [(\delta, \tfrac{c_1 + c_2 + c_3 + c_4} 4)]. \]
Hence
\[ [(\phi_1, c_1) \wedge (\phi_2, c_2) \wedge (\phi_3, c_3) \wedge (\phi_4, c_4)]
 \;\subseteq\; [(\delta, \tfrac{c_1 + c_2 + c_3 + c_4} 4 )], \]
and we conclude that $\mathcal T$ entails
\[ (\phi_1, c_1) \wedge (\phi_2, c_2) \wedge (\phi_3, c_3) \wedge (\phi_4, c_4) \;\impl\; (\delta, \tfrac{c_1 + c_2 + c_3 + c_4} 4 ). \]
Hence, as desired, the particular questionnaire outcome $c_1, c_2, c_3, c_4$ implies that $\delta$ holds to the degree $\tfrac{c_1 + c_2 + c_3 + c_4} 4$.

\begin{remark}\label{remarkdimension}

The model presented above may be illustrated as follows. A patient's answers $\chi$ to a questionnaire may be visualised by a point in a multi-dimensional space, where each dimension corresponds to one item of the questionnaire. Furthermore, the counterexamples and prototypes of the disorder in question are represented by points in this space as well. Finally, the obtained score is the relative distance of $\chi$ from the counterexamples and prototypes; cf.\ {\rm (\ref{fml:evaluation-graded-variable})}. Assuming only three items, the situation is expressed in Figure {\rm \ref{fig}}.

\begin{figure}[h]
\begin{center}

\includegraphics[scale=0.3]{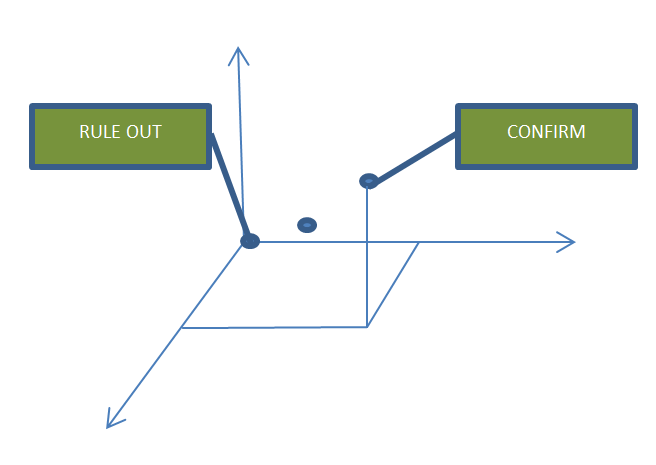} 
\end{center}
\caption{Illustration of the distance of $\chi$, representing a patient's answers, from the counterexamples and prototypes of a disorder.} \label{fig}
\end{figure}

\end{remark}

\section{Conclusion}
\label{sec:conclusion}

Representing apparently straightforward medical reasoning within a logical framework is generally a tricky task. Principle limits have been observed in case of fuzzy-logic based medical decision support systems; see, for instance, \cite{ciabattoni2013formalapproaches}. In this paper we address a specific further problem in the medical domain: providing a formal framework for the score calculation in assessment questionnaires. We have included two very different approaches following the same aim. Roughly speaking, we may summarise our efforts as follows: putting up an elegant formal system leads to difficulties in interpretation; restricting to well-justified principles leads to narrow constraints on the formal side.

At the end, in the medical context, the more valuable approach is the one that is more useful in practice. Thus, an evaluation of the proposed approaches is the most straightforward direction for further research.

On the more theoretical side, the first approach is basically a fuzzy logic that is, in line with our application, able to deal with mean values. Apart from that, our logic is characterised by the fact that the implication is present but not as a connective in the usual sense. The implication is, so-to-say, crispified by the attachment of an explicit truth degree.

The idea underlying the second part of the present paper was to represent vague properties by pairs of sets in a metric space; in this way, prototypes and counterexamples are modelled separately and the truth degrees are determined by the underlying metric. This approach provides an intuitively appealing model of vague notions in general and of notions occurring in the context of medical questionnaires in particular. It might be interesting to note that the independency of the questionnaire items that we have assumed for our model -- cf.\ Remark \ref{remarkdimension} -- is also a central basic assumption made in the majority of statistical models developed for analyzing questionnaires, such as classical test theory and item responce theory (\cite{reckase1997past,mcdonald2000basis,devellis2006classical}). Exploring further connections to these models seems a fruitful direction for further research.

As regards the second approach, the most important issue for further research is an axiomatisation. To define a nice proof system for a logic of this kind remains a serious, although possibly quite rewarding, challenge. A possible way to go might be to subject the pairs of subsets modelling vague properties to additional constraints.

\subsubsection*{Acknowledgements.} 
The first author was supported by the Austrian Science Fund (FWF): project I 1923-N25 (New perspectives on residuated posets). The second author was supported by The Israel Science Foundation under grant agreement no. 817/15. 

We would moreover like to express our gratitude to the reviewers, whose constructive criticism led to an improvement of the results of this paper.

%%%%%%%%%%%%%%%%%%%%%%%%%%%%%%%%%%%%%%%%%%%%%%%%%%%%%%%%%%%%%%%%%%%%%%%

\end{document}